\def\refp #1.{(\ref{#1})}
\newcommand{\A}{\mathcal{A}}
\def\sbr #1.{^{[#1]}}
\def\sfl #1.{^{\lfloor #1\rfloor}}
\def\?{{\bf{??}}}
\def\A{\Bbb A}
\def\O{\mathcal O}
\def\g{\mathfrak g}
\def\1/2{\frac{1}{2}}
\def\2{{[2]}}
\def\nl{\newline}
\def\<{\langle}
\def\>{\rangle}
\def\2{{[2]}}
\def\scl #1.{^{\lceil#1\rceil}}
\def\spr #1.{^{(#1)}}
\def\sbc #1.{^{\{#1\}}}
\def\subpr#1.{_{(#1)}}
\def\beq{\begin{equation*}}
\def\eeq{\end{equation*}}
\def\g3{{\Gamma\spr 3.}}
\def\cK{\mathcal{K}}
\newcommand{\eqspl}[2]{
\begin{equation}\label{#1}
\begin{split}
#2\end{split}\end{equation}}
\newcommand{\beginalphaenum}{
\begin{enumerate}\renewcommand{\labelenumi}{ }
\item \begin{enumerate}
}
\def\eex{\end{rm}\end{example}}
\newtheorem{thm}{Theorem} 
\newtheorem*{thm*}{Theorem}
\newtheorem*{prop*}{Proposition}
\newtheorem{cor}[thm]{Corollary}
\newtheorem*{cor*}{Corollary}
\newtheorem{lem}[thm]{Lemma}
\newtheorem{lem*}{Lemma}
\newtheorem*{claim*}{Claim}
\newtheorem{prop}[thm]{Proposition}
\newtheorem{defn}[thm]{Definition}
\theoremstyle{remark}
\newtheorem{rem}[thm]{Remark}
\newtheorem{crit-rem}[thm]{Critical remark}
\newtheorem{remarks}[thm]{Remarks}
\newtheorem{example}[thm]{Example}
\newtheorem*{example*}{Example}
\newtheorem*{defn*}{Definition}
\begin{document} 
\title{On the syzygies of reducible curves}
\author 
{Ziv Ran}



\date {\today}


\address {\nl UC Math Dept. \nl
Big Springs Road Surge Facility
\nl
Riverside CA 92521 US\nl }

\email {ziv.ran @ucr.edu}
 \subjclass{14N99,14H99}
\keywords{linear systems, nodal curves, property N$_p$, syzygies}

\begin{abstract}

A theorem of Green says that a line bundle of degree $\geq 2g+1+p$ on a smooth
curve $X$ of genus $g$ has property $\mathrm N_p$. We prove a similar conclusion
for certain singular,  reducible curves $X$ under suitable
degree bounds over all irreducible components of $X$.

\end{abstract}

\maketitle

\section{Statements}
For a smooth curve $X$ of genus $g$, one of
the main results about  line bundles $L$ 
of 'large' degree $d$ is Mark L. Green's theorem (see \cite{green-koszul} or
\cite{laz-pos}, \S 1.8) that for all $0\leq p\leq d-2g-1$,  $L$ has property $\mathrm N_p$.
This means that $L$ is very ample and projectively normal ($p=0$), the homogeneous  ideal 
of $\phi_L(X)$
is generated by quadrics ($p=1$), and the $(p-1)$st order syzygies of
$\phi_L(X)$ are linear $(p\geq 2)$. This theorem is a natural
extension of the trivial facts that  $L$ is globally generated (resp. very ample) 
 if $d\geq 2g$ (resp. $d\geq 2g+1$).\par
As property $N_p$ has strong implications for the extrinsic geometry and the syzygies of the projective
image of $X$ via sections of $L$, one naturally seeks generalizations of these results to possibly singular curves.
The obvious generalizations to the case of nodal, possibly reducible, curves, involving
assumptions on the total degree,  trivially fail.
Indeed clearly, any numerical condition guaranteeing good behavior on reducible curves must involve
irreducible components or  subcurves
of $X$. One such condition called \emph{balancedness} was considered by Caporaso \cite{caporaso-imrn}.
A different condition was considered by Franciosi and Tenni \cite{franciosi-tenni} and used to prove
property $\mathrm N_0$.
In this paper we focus on a  condition involving all irreducible components,
and with it prove a version of Green's theorem for all $p\geq 0$. To state the result we need
some definitions.\par
A \emph{deeply lci} variety $X$ is a pure reduced variety such that every component sum $Z$ of $X$
is lci and meets the complementary component sum $Z^+:=\overline{X\setminus Z}$ in a Cartier divisor
on $Z$. Any locally planar curve is deeply lci (see Remark \ref{remarks}, (ii) below). 
Let $X$ be a deeply lci curve, $L$ a line bundle on $X$ and $W\subset H^0(L)$ a subspace
of dimension $m$. For a component sum $Z$ of $X$, we let $L_Z:=L\otimes\O_Z$, let $W_Z\subset H^0(L_Z)$
denote the  linear system on $Z$ induced by $W$, $m_Z$ its dimension,
and $g_{Z,X}$ the rank of the restriction map $H^0(\omega_X)\to H^0(\omega_X\otimes\O_Z)$.
\begin{thm}\label{N_p-thm}
Let $X$ be a deeply lci curve and $W\subset H^0(L)$ a linear system on $X$. 
Then $W$ has property $\mathrm N_p$ if for all irreducible components $Z\leq X$, 
\eqspl{}{
p\leq m_Z-g_{Z,X}-2. 
}
\end{thm}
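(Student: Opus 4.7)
I would translate property $\mathrm N_p$ into the vanishing of Koszul cohomology groups $K_{i,j}(X,W,L)=0$ for $0\le i\le p$, $j\ge 2$ (together with the normality/very ampleness statements packaged into $K_{\bullet,0}$ and $K_{\bullet,1}$), and argue by double induction: the outer induction on $p$, and the inner induction on the number of irreducible components of $X$. The base $p=0$ (projective normality and very ampleness of $W$) would be handled first, since once $N_0$ is in hand the rest is a standard Koszul-cohomology game. The engine is Green's duality/syzygy lemma which, given $N_0$, reduces vanishing of $K_{p,1}$ to that of $K_{p-1,1}$ after tensoring with a suitable short exact sequence.

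The natural exact sequence to feed into the Koszul complex comes from the deeply-lci hypothesis: for an irreducible component $Z\le X$ with complement $Z^+$, one has
\[
0\to L^{\otimes q}|_{Z^+}(-Z\cap Z^+)\to L^{\otimes q}\to L^{\otimes q}|_Z\to 0,
\]
and the Cartier hypothesis on $Z\cap Z^+$ in $Z$ makes the sheaves on the left and right lci line bundles on smaller deeply lci curves. Tensoring the Koszul complex of $(W,L)$ on $X$ with this sequence produces a long exact sequence relating $K_{p,q}(X,W,L)$ to $K_{p,q}(Z,W_Z,L|_Z)$ and to a Koszul-type group for $(Z^+,W_{Z^+},L|_{Z^+}(-Z\cap Z^+))$. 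On $Z$, which I would choose irreducible (and hence smooth after normalization, or irreducible lci), Green's classical theorem applies, provided the numerical condition $p\le m_Z-g_Z-1$ in the smooth case is implied by our hypothesis $p\le m_Z-g_{Z,X}-2$; this is where the invariant $g_{Z,X}$ (the rank of $H^0(\omega_X)\to H^0(\omega_X|_Z)$) enters, since by the adjunction/restriction sequence for $\omega_X$ it dominates the geometric genus of $Z$ plus the contribution of $Z\cap Z^+$.

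On $Z^+$, which has strictly fewer components than $X$, I would invoke the inductive hypothesis. The crux is verifying that for every irreducible $Z'\le Z^+$, the pair $(m_{Z'},g_{Z',Z^+})$ computed inside $Z^+$ still satisfies $p\le m_{Z'}-g_{Z',Z^+}-2$. One inequality, $g_{Z',Z^+}\le g_{Z',X}$, is immediate because the restriction $H^0(\omega_X)\to H^0(\omega_X|_{Z'})$ factors through $H^0(\omega_X|_{Z^+})$, and the appropriate substitution of $\omega_X|_{Z^+}$ for the dualizing sheaf of $Z^+$ uses adjunction. The other — that $m_{Z'}$ is not dropped too much when one passes from $W$ to $W_{Z^+}$ and then restricts to $Z'$ — is forced by the assumption applied to all components, via a dimension count using the exact sequence above at $q=1$.

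The main obstacle I expect is precisely this bookkeeping: controlling how $m_Z$ and $g_{Z,X}$ transform under restriction to a component sum, and propagating the hypothesis cleanly through the induction on the number of components. A secondary difficulty is the base case $p=0$: establishing projective normality for a reducible deeply lci curve from the hypothesis $m_Z\ge g_{Z,X}+2$ likely requires a direct multiplication-map argument (using the same short exact sequence and the cohomology $H^1(L^{\otimes q}|_{Z^+}(-Z\cap Z^+))$), distinct from the Koszul induction carrying the higher $p$.
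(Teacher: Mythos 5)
Your skeleton --- induction on the number of irreducible components, peeling off an irreducible $Z$ via the restriction sequence $0\to L^{\otimes q}|_{Z^+}(-Z\cap Z^+)\to L^{\otimes q}\to L^{\otimes q}|_Z\to 0$, Green's theorem on the irreducible piece, inductive hypothesis on $Z^+$ --- matches the paper's. But two of your steps are genuine gaps. First, there is no ``Green's duality/syzygy lemma which, given $N_0$, reduces vanishing of $K_{p,1}$ to that of $K_{p-1,1}$'': property $\mathrm N_p$ is strictly stronger than $\mathrm N_0$, no such descent exists, and so your outer induction on $p$ has no engine; likewise the separate multiplication-map argument for $p=0$ is not needed. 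The paper uses duality exactly once and in the opposite direction: Green's duality (2.c.6), valid for Gorenstein curves, converts the whole package $\mathrm N_p$ into the single vanishing statement $\cK_{t,0}(X,\omega_X,L,W)=0$ for all $t\geq\max_Z(g_{Z,X}-m_Z+m)$ (Theorem \ref{vanishing-thm}), after which there is no induction on $p$ at all.

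Second, your inner induction does not close as stated. The subobject in your restriction sequence lives on $Z^+$ but is twisted by $-Z\cap Z^+$, and the resulting group is not a Koszul group of a linear system on $Z^+$: the twist cannot be absorbed into $L$, since the $\bigwedge^{\bullet}W$ part of the complex is untouched; it is a Koszul group with a nontrivial \emph{coefficient sheaf}. So the inductive hypothesis ``$\mathrm N_p$ holds on $Z^+$'' is not the statement you need there. The paper resolves this by proving the more general Theorem \ref{vanishing-e-thm}, namely $\cK_{t,0}(X,E,L,W)=0$ for an arbitrary coefficient bundle $E$ once $t\geq\max_Z(e_Z-m_Z+m)$ with $e_Z=\mathrm{rk}(H^0(E)\to H^0(E_Z))$; this class of statements is stable under the induction because $Y=Z^+$ inherits the coefficient bundle $E_Y(-Y.Z)$. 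The numerology then propagates through the filtration of $\bigwedge^tW$ with graded pieces $\bigwedge^iW(-Z)\otimes\bigwedge^{t-i}W_Z$ --- precisely the ``bookkeeping'' you flag as the main obstacle --- and on the irreducible piece one needs only Green's injectivity lemma ($\bigwedge^tW\otimes V\to\bigwedge^{t-1}W\otimes H^0(E(L))$ is injective for $t\geq\dim V$), not his $\mathrm N_p$ theorem. Without dualizing first and without the generalization to arbitrary $E$, your plan cannot be completed as written.
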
 
Let $g_Z$ denote the genus of $Z$, i.e. $h^0(\omega_Z)$, and $n_Z=Z.Z^+$ .
If $W=H^0(L)$, a crude estimate yields $m_Z\geq \deg_Z(L)-g_Z+1-n_Z$, hence
\begin{cor}
A line bundle $L$ on a deeply lci curve has property $\mathrm N_p$ if for all irreducible components $Z$,
\eqspl{}{
p\leq \deg_Z(L)-g_Z-g_{Z,X}-n_Z-1 
}
\end{cor}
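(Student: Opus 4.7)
The plan is to deduce the corollary from Theorem~\ref{N_p-thm} applied to $W = H^0(L)$. The theorem's hypothesis $p \leq m_Z - g_{Z,X} - 2$ will follow directly from the corollary's assumption, once we establish the lower bound
\[
m_Z \geq \deg_Z(L) - g_Z + 1 - n_Z
\]
for every irreducible component $Z \leq X$.

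To prove this bound, I would set $D = Z \cap Z^+$, which is Cartier on both $Z$ and $Z^+$ by the deeply lci hypothesis. The restriction sequence $0 \to L|_{Z^+}(-D) \to L \to L|_Z \to 0$ identifies $m_Z$ with $h^0(L) - h^0(L|_{Z^+}(-D))$. Applying Riemann-Roch on $X$ and on $Z^+$, together with the standard arithmetic-genus identity $g_X = g_Z + g_{Z^+} + n_Z - 1$, one rewrites this as the exact formula
\[
m_Z = \deg_Z(L) - g_Z + 1 + h^1(L) - h^1(L|_{Z^+}(-D)).
\]

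The crux is therefore the cohomological estimate $h^1(L|_{Z^+}(-D)) \leq h^1(L) + n_Z$. I would combine two short exact sequences. First, $0 \to L|_{Z^+}(-D) \to L|_{Z^+} \to L|_D \to 0$ on $Z^+$, whose long exact sequence yields $h^1(L|_{Z^+}(-D)) \leq h^1(L|_{Z^+}) + n_Z$ since $h^0(L|_D) = n_Z$. Second, the companion sequence $0 \to L|_Z(-D) \to L \to L|_{Z^+} \to 0$ on $X$, which produces a surjection $H^1(L) \twoheadrightarrow H^1(L|_{Z^+})$ and thus $h^1(L|_{Z^+}) \leq h^1(L)$. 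Chaining the two inequalities completes the estimate, and substitution into Theorem~\ref{N_p-thm} yields the corollary; no serious obstacle is anticipated, as the whole derivation amounts to cohomological bookkeeping made possible by the deeply lci structure of $X$.
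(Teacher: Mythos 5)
Your proposal is correct and takes the same route as the paper, which derives the corollary in one line by applying Theorem~\ref{N_p-thm} to $W=H^0(L)$ together with the ``crude estimate'' $m_Z\geq \deg_Z(L)-g_Z+1-n_Z$; your contribution is simply to supply a proof of that estimate, and your cohomological bookkeeping is valid (the only quibble is that $g_X=g_Z+g_{Z^+}+n_Z-1$ presumes $Z^+$ connected, but replacing $1$ by $b_0(Z^+)$ leaves your final formula $m_Z=\deg_Z(L)-g_Z+1+h^1(L)-h^1(L|_{Z^+}(-D))$ unchanged). A shorter verification of the same estimate: sections of $L|_Z(-D)$ extend by zero to $X$, so $m_Z\geq h^0(L|_Z(-D))\geq \deg_Z(L)-n_Z-g_Z+1$ by Riemann--Roch on $Z$.
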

If $X$ is nodal, then an elementary consequence of the residue theorem is
\eqspl{}{
g_{Z,X}=g_Z+n_Z-b_0(Z^+)
} where $b_0(Z^+)$ is the number of connected
components of $Z^+$. Hence
\begin{cor}
Notations as above, if $X$ is nodal, then $L$ has property $\mathrm N_p$ if for
all irreducible components $Z$,
\eqspl{}{
p\leq \deg_Z(L)-2g_Z-2n_Z-1+b_0(Z^+).
}
\end{cor}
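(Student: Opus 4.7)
The plan is to derive this corollary by substituting the formula $g_{Z,X}=g_Z+n_Z-b_0(Z^+)$ into the bound of the preceding corollary. That substitution is mechanical:
\beq \deg_Z(L)-g_Z-\bigl(g_Z+n_Z-b_0(Z^+)\bigr)-n_Z-1=\deg_Z(L)-2g_Z-2n_Z-1+b_0(Z^+), \eeq
so the actual content of the corollary is the nodal identity for $g_{Z,X}$, which I would verify directly.

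To do this I would use the standard identification $\omega_X|_Z\cong\omega_Z(D)$ with $D:=Z\cap Z^+$ supported at smooth points of $Z$, and analyze the composition
\beq H^0(\omega_X)\xrightarrow{\rho}H^0(\omega_Z(D))\xrightarrow{\mathrm{res}}\C^{n_Z}, \eeq
where $\mathrm{res}$ records residues at the points of $D$. The kernel of $\mathrm{res}$ is $H^0(\omega_Z)$ and sits inside the image of $\rho$ (extend by the zero section on $Z^+$), contributing $g_Z$ to $g_{Z,X}$. A residue vector $(r_1,\dots,r_{n_Z})$ belongs to $\mathrm{image}(\mathrm{res}\circ\rho)$ exactly when it can be matched by a section of $\omega_{Z^+}(D)$ with residues $-r_i$ at $D$, and applying the residue theorem to $\omega_X|_{Y_k}\cong\omega_{Y_k}(D\cap Y_k)$ on each connected component $Y_k$ of $Z^+$ yields the $b_0(Z^+)$ linearly independent (disjointly supported) constraints $\sum_{p_i\in Y_k}r_i=0$. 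Hence the image has dimension $n_Z-b_0(Z^+)$, and altogether $g_{Z,X}=g_Z+n_Z-b_0(Z^+)$; the degenerate case $Z^+=\emptyset$ (forcing $n_Z=b_0(Z^+)=0$) is consistent with $g_{Z,X}=g_Z$.

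The one non-trivial step is the surjectivity: that every residue vector satisfying the component-wise vanishing conditions actually arises from a section of $\omega_{Z^+}(D)$ over $Z^+$. I expect this to follow from Riemann-Roch and Serre duality applied on each $Y_k$, giving surjectivity of $H^0(\omega_{Y_k}(D\cap Y_k))\to\C^{|D\cap Y_k|}$ onto the hyperplane of residues summing to zero, and these surjections globalize across the components of $Z^+$. This is the only genuine computation; the rest is bookkeeping with the residue theorem and the preceding corollary.
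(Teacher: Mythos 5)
Your proposal is correct and is exactly what the paper intends: the corollary is obtained by substituting the identity $g_{Z,X}=g_Z+n_Z-b_0(Z^+)$ into the preceding corollary, and the paper simply asserts that identity as ``an elementary consequence of the residue theorem'' without writing out the verification you supply (which is the standard one, via $\omega_X|_Z\cong\omega_Z(D)$, the residue exact sequence, and Serre duality on each connected component of $Z^+$).
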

Here the case $X$ irreducible is precisely (an essentially trivial generalization of) Green's original 
result. 
For $p=0$ this Corollary implies Caporaso's Theorem 2.2.1 in \cite{caporaso-imrn}.
We are not aware of any results in the literature about the reducible case for $p>0$.\par
Our proof of Theorem \ref{N_p-thm} follows Green's proof by using duality in Koszul
cohomology to reduce the statement to the following vanishing theorem for Koszul $\cK_{t, 0}$ cohomology:
\begin{thm}\label{vanishing-thm}
Notations as in Theorem \ref{N_p-thm}, the Koszul cohomology
\eqspl{}{
\cK_{t,0}(X, \omega_X,  L, W)=0, \forall t\geq \max\limits_Z(g_{Z,X}-m_Z+m) 
} the max being over all irreducible components $Z$.
\end{thm}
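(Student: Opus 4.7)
The plan is to imitate Green's classical proof, inducting on the number of irreducible components of $X$ and reducing at each step to a single component $Z$ together with its complementary component sum $Z^+ = \overline{X \setminus Z}$. Because $X$ is deeply lci, the ideal sheaf $\I_{Z/X}$ is invertible and adjunction gives $\omega_X \otimes \I_{Z/X} \cong \omega_{Z^+}$; we therefore have the short exact sequence
\begin{equation*}
0 \to \omega_{Z^+} \to \omega_X \to \omega_X|_Z \to 0.
\end{equation*}
Tensoring this sequence with the Koszul complex for $(L, W)$ and taking global sections yields a long exact sequence in Koszul cohomology. Since $\cK_{t, 0}$ sits at the leftmost spot of the Koszul complex, the vanishing of $\cK_{t, 0}(X, \omega_X, L, W)$ is implied by the simultaneous vanishing of $\cK_{t, 0}(X, \omega_{Z^+}, L, W)$ and $\cK_{t, 0}(X, \omega_X|_Z, L, W)$.

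Each of these two groups lives on a smaller curve. Because $\omega_X|_Z$ is supported on $Z$, elements of $W' := \ker(W \twoheadrightarrow W_Z)$ act trivially in the Koszul differential; filtering $\wedge^t W$ by wedge powers of $W'$ decomposes $\cK_{t, 0}(X, \omega_X|_Z, L, W)$ into graded pieces of the form $\wedge^j W' \otimes \cK_{t-j, 0}(Z, \omega_X|_Z, L|_Z, W_Z)$ for $0 \leq j \leq m - m_Z$. The analogous argument reduces $\cK_{t, 0}(X, \omega_{Z^+}, L, W)$ to Koszul cohomology on $Z^+$ with respect to $(W_{Z^+}, L|_{Z^+})$. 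The base case of the induction is $X$ irreducible, where the hypothesis simplifies to $t \geq g$ and the statement is the usual Green vanishing for an irreducible Gorenstein curve. In the inductive step, the $Z$-summand vanishes by the base case applied to the irreducible component $Z$ as soon as $t - (m - m_Z) \geq g_{Z, X}$, which is precisely the hypothesis; the $Z^+$-summand vanishes by the induction hypothesis since $Z^+$ has strictly fewer components than $X$.

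The main obstacle is verifying that the hypothesis of the theorem descends from $(X, W)$ to $(Z^+, W_{Z^+})$: one must show that for each irreducible component $Z' \leq Z^+$ the assumption $t \geq m + g_{Z', X} - m_{Z'}$ implies $t \geq m_{Z^+} + g_{Z', Z^+} - m_{Z', Z^+}$. Since $W \twoheadrightarrow W_{Z^+}$ gives $m_{Z', Z^+} = m_{Z'}$, this reduces to the numerical inequality
\begin{equation*}
m - m_{Z^+} + g_{Z', Z^+} \leq g_{Z', X},
\end{equation*}
which should be established by applying the first paragraph's adjunction sequence to $Z'$ inside both $X$ and $Z^+$ and comparing the ranks of the two restriction maps on $H^0$. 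Carrying out this bookkeeping, together with rigorously justifying the filtration decomposition at the level of Koszul complexes (and not merely on associated gradeds), is where the essential work lies; once these pieces are in hand the induction closes.
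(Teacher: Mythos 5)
Your overall strategy --- induction on the number of irreducible components, splitting off one component $Z$ via $0\to\omega_{Z^+}\to\omega_X\to\omega_X|_Z\to 0$, and filtering $\bigwedge^tW$ by the kernel of $W\to W_Z$ --- is the same as the paper's (which runs the induction for the more general Theorem \ref{vanishing-e-thm}). But there is a genuine gap at the step where you split off the component $Z$. The induced sequence of Koszul complexes is not exact on the right: the restriction maps $H^0(\omega_X\otimes L^{j})\to H^0(\omega_X|_Z\otimes L^{j})$ need not be surjective, so the quotient of the Koszul complex of $\omega_X$ by that of $\omega_{Z^+}$ is built from the images $V_Z^{(j)}=\mathrm{im}\bigl(H^0(\omega_X\otimes L^{j})\to H^0(\omega_X|_Z\otimes L^{j})\bigr)$, not from the full spaces $H^0(\omega_X|_Z\otimes L^{j})$. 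Vanishing of $\cK_{t,0}(X,\omega_X|_Z,L,W)$ does not imply vanishing of the middle cohomology of this image subcomplex (a cocycle there could be a coboundary only via an element of $H^0(\omega_X|_Z\otimes L^{-1})$ lying outside $V_Z^{(-1)}$), so the reduction ``both outer groups vanish, hence the middle one does'' does not close as stated.

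This is not a removable technicality: it is exactly why the sharp constant $g_{Z,X}=\dim V_Z^{(0)}$ (the rank of the restriction map) appears in the theorem. If you apply Green's vanishing on the irreducible curve $Z$ to the full coefficient sheaf $\omega_X|_Z$, the bound you obtain is $t-j\geq h^0(\omega_X|_Z)$, and $h^0(\omega_X|_Z)$ is in general strictly larger than $g_{Z,X}$ (for $X$ nodal, $g_{Z,X}=g_Z+n_Z-b_0(Z^+)$ while $h^0(\omega_X|_Z)=g_Z+n_Z-1$ once $n_Z>0$); also the coefficient sheaf on $Z$ is $\omega_X|_Z$, not $\omega_Z$, so the ``base case'' of the $\omega$-version does not literally apply. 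The missing ingredient is the subspace refinement of Green's argument, isolated in the paper as Lemma \ref{green-lem}: for $Z$ irreducible and \emph{any} subspace $V\subset H^0(E)$, the map $\bigwedge^tW\otimes V\to\bigwedge^{t-1}W\otimes H^0(E(L))$ is already injective for $t\geq\dim V$. Applied with $V=V_Z^{(0)}$ of dimension $g_{Z,X}$, this makes the image column injective at the bottom (hence exact) under precisely the stated hypothesis. Without this refinement your argument proves at best a weaker bound with $g_{Z,X}$ replaced by $h^0(\omega_X|_Z)$. A smaller point: the descent inequality you propose to verify, $m-m_{Z^+}+g_{Z',Z^+}\leq g_{Z',X}$, is stronger than necessary and is false in general; after accounting for the shift $t\mapsto t-j$ with $j\leq m-m_{Z^+}$, what is needed is only $g_{Z',Z^+}\leq g_{Z',X}$, which holds because $H^0(\omega_{Z^+})$ embeds in $H^0(\omega_X)$ by extension by zero compatibly with restriction to $Z'$.
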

In fact, this result is a special case of a more general vanishing theorem
extending Green's theorem (3.a.1) in \cite{green-koszul}. To state this, let $E$ be a vector
bundle on $X$ and for a component sum $Z$, denote by $e_Z$ the rank of
the restriction map $H^0(E)\to H^0(E_Z)$.
\begin{thm}\label{vanishing-e-thm}
Notation as above, the Koszul cohomology 
\eqspl{}{
\cK_{t,0}(X, E,  L, W)=0}
provided
\eqspl{t-lower-bound-eq}{
t\geq \max\limits_Z(e_Z-m_Z+m) 
} the max being over all irreducible components $Z$ of $X$.
\end{thm}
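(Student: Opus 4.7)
The plan is to follow Green's strategy from his Theorem 3.a.1 in \cite{green-koszul}, carried out one irreducible component at a time and then assembled using the deeply lci structure of $X$.

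First I introduce the tautological syzygy sheaf $M_W=\ker(W\otimes \O_X\to L)$, coherent on $X$ and locally free of rank $m-1$ where $W$ is base-point free. Taking the $t$th exterior power of the evaluation sequence gives
\[0\to\wedge^t M_W\otimes E\to\wedge^t W\otimes E\to\wedge^{t-1}M_W\otimes L\otimes E\to 0,\]
and the resulting long exact sequences in cohomology, for varying $t$, identify $H^0(X,\wedge^t M_W\otimes E)$ with the kernel of the Koszul differential $\wedge^t W\otimes H^0(E)\to\wedge^{t-1}W\otimes H^0(L\otimes E)$, of which $\cK_{t,0}(X,E,L,W)$ is a quotient. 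Thus it is enough to prove $H^0(X,\wedge^t M_W\otimes E)=0$.

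Since $\wedge^t M_W\otimes E$ is locally free on the reduced curve $X$, a global section is zero iff its restriction to every irreducible component vanishes. Fix $Z\leq X$ and set $W_{-Z}=\ker(W\to H^0(L_Z))$, of dimension $m-m_Z$. The deeply lci hypothesis makes $L_Z$ a line bundle on $Z$, and one obtains
\[0\to W_{-Z}\otimes\O_Z\to M_W|_Z\to M_{W_Z}\to 0,\]
where $M_{W_Z}=\ker(W_Z\otimes\O_Z\to L_Z)$. The induced filtration of $\wedge^t M_W|_Z$ has graded pieces $\wedge^a W_{-Z}\otimes\wedge^{t-a}M_{W_Z}$ for $a=0,\ldots,\min(t,m-m_Z)$.

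The restriction to $Z$ of any class in $H^0(X,\wedge^t M_W\otimes E)$ has its $E$-coefficients in the image $V_Z=\mathrm{im}(H^0(E)\to H^0(E|_Z))$, of dimension $e_Z$. Hence the graded-piece vanishing one needs is the subspace version of Green's $\cK_{s,0}$ vanishing on the single component $Z$: the analog of $\cK_{s,0}(Z,E|_Z,L_Z,W_Z)$ with $H^0(E|_Z)$ replaced by $V_Z$ vanishes for $s\geq\dim V_Z=e_Z$. Applying this with $s=t-a$: since $a\leq m-m_Z$, the hypothesis $t\geq e_Z-m_Z+m$ gives $s\geq e_Z$ for every $a$, yielding the desired vanishing.

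I expect the main obstacle to be the subspace version of Green's Theorem 3.a.1 used on the irreducible component $Z$: this is established by rerunning the dimension count in Green's argument with $V_Z$ in place of $H^0(E|_Z)$, together with checking that the Koszul formalism goes through on the possibly singular deeply lci $Z$. A secondary subtlety is controlling the sheaves $M_W$ and $\wedge^t M_W$ at the nodes of $X$; the deeply lci hypothesis, precisely by making every component-pair intersection Cartier, is what keeps these sheaves locally free and thus legitimizes the ``restrict to each component'' argument.
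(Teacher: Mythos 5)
Your proposal is correct in its essentials, but it takes a genuinely different route from the paper's. The paper argues by induction on the number of irreducible components: it splits off a single component $Z$ with complement $Y=Z^+$, forms a three-by-three diagram whose exact rows are the restriction sequences relating $H^0(E(jL))$, $H^0(E_Y(-Y.Z+jL))$ and the image $V_Z^{(j)}$ of restriction to $Z$, handles the right-hand column by the same filtration-by-powers-of-$W(-Z)$ plus subspace-Green argument you use on each component, and handles the left-hand column by the induction hypothesis applied to $(Y,E_Y(-Y.Z),L_Y,W_Y)$. You instead prove the stronger statement that the Koszul differential $\bigwedge^t W\otimes H^0(E)\to\bigwedge^{t-1}W\otimes H^0(E\otimes L)$ is injective: for each component $Z$ the induced map on $\bigwedge^t W\otimes V_Z$ is injective by Lemma \ref{green-lem} applied to the graded pieces of the filtration (this is exactly the paper's right-column claim, valid since $t-a\geq t-(m-m_Z)\geq e_Z$), and an element of $\bigwedge^t W\otimes H^0(E)$ killed by restriction to every component is zero because $E$ is locally free and $X$ is reduced. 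This eliminates the induction and the entire left column of the paper's diagram, and it uses less of the deeply lci structure (the paper needs $Z\cap Z^+$ Cartier to identify $\ker(H^0(E)\to H^0(E_Z))$ with $H^0(E_{Z^+}(-Z.Z^+))$, which your argument never requires).

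Two points need tightening. First, routing the argument through $M_W=\ker(W\otimes\O_X\to L)$ silently assumes $W$ is base-point free, which the theorem does not; this is harmless, since you can drop $M_W$ entirely and work directly with an element $\sigma$ of the kernel of the Koszul differential inside $\bigwedge^t W\otimes H^0(E)$, restricting its $H^0(E)$-factor to each component. Second, the filtration must be taken on the vector space $\bigwedge^t W$ (equivalently on the complex $\bigwedge^{\bullet} W\otimes V_Z$), not on the sheaf $\wedge^t M_W|_Z$: the graded pieces of the sheaf filtration have $H^0$ equal to $\bigwedge^a W(-Z)\otimes H^0(Z,\wedge^{t-a}M_{W_Z}\otimes E_Z)$, which does not vanish for $t-a\geq e_Z$ (Green would require $t-a\geq h^0(E_Z)$); what vanishes is the part whose $E$-coefficients lie in $V_Z$, and only the vector-space filtration keeps track of that condition. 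With those adjustments your argument is a clean simplification of the paper's proof.
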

\begin{example}
Suppose $X$ contains a component $Z$ of genus $g_Z>0$ with
complement $Y=Z^+$. Then, assuming $L_Y(-Y.Z)$ is nonspecial and $W=H^0(L)$,
we have for  \[W(-Z):=\ker (W\to H^0(L_Z)), \ \ 
t=\deg_Y(L)-n_Z-g_Y+1=\dim W(-Z),\]
that $\cK_{t,0}(X, \omega_X, L)\neq 0$ because it contains 
$\bigwedge\limits^tW(-Z)\otimes H^0(\omega_Z)$. Hence by duality
$\cK_{m-2-t, 2}(X, L)=\cK_{\deg_Z(L)-g_Z-1, 2}(X, L)\neq 0$, so $L$
fails to have property $\mathrm N_{\deg_Z(L)-g_Z-1}$.
For instance, the canonical system itself fails to have property $N_{g_Z+n_Z-3}$
provided $g_Z>0$.
\end{example}
\section{Basics }
Throughout this paper we work with complex projective reduced, pure, possibly reducible varieties,
usually curves.
\begin{defn}
A pure, reduced variety $X$ is said to be deeply lci (resp. deeply Gorenstein)  if 
\par (i) every component
sum $Z\leq X$ is lci (resp. Gorenstein);\par
(ii) for every component sum $Z$ with complement $Z^+:=\overline{X\setminus Z}$,
the schematic intersection $Z\cap Z^+$ is a Cartier divisor on $Z$.
\end{defn}
\begin{remarks}\label{remarks}
\begin{enumerate}

\item Normal crossings  $\implies$ deeply lci
$\implies$ deeply Gorenstein. 
\item Any variety that is locally a hypersurface, e.g. a locally planar curve,
 is deeply lci. Indeed any component sum $Z$ is also
locally a hypersurface, hence lci; and because $Z^+$ is 
locally a divisor on a smooth variety containing 
$X=Z\cup Z^+$,
it meets $Z$ in a Cartier divisor.

\item Not every lci (resp. Gorenstein) variety is deeply lci (resp. Gorenstein): for example,
$V(xy, zw)\subset\A^4$ has as component sum $V(x,z)\cup V(y,w)$ which is not even Cohen-Macaulay.
\item if $Z$ is a component sum on $X$ deeply lci, then
by basic properties of dualizing sheaves, we have an 'adjunction formula' 
\eqspl{}{
\omega_X\otimes\O_Z=\omega_Z(Z. Z^+).
} We believe this formula holds in the deeply Gorenstein case but 
have not checked it and will not need it.
\end{enumerate}
\end{remarks}

%
For a reduced Gorenstein curve $X$, the genus $g_X$ (sometimes denoted $g(X)$) is the arithmetic genus, i.e. the number such
that $\deg(\omega_X)=2g_X-2, h^0(\omega_X)=g_X$.
\begin{defn}
(i) A line bundle $L$ on $X$ is said to be $k$- numerically nonspecial or $k$-nuns if
\[\deg(L)\geq 2g_X-2+k.\]\par
(ii) $L$ is said to have expected dimension if $H^1(L)=0$.\par 
(iii) $L$ is said to be  $k$-uniformly numerically nonspecial or $k$-ununs if for all subcurves
 $Y\subset X$, $L_Y$ is $k$-nuns on $Y$.
 \par (iv)  $L$ is said to be $k$-spanned if for every ideal
 $I$ of colength $k$ on $X$, the natural map $H^0(L)\to H^0(L/IL)$ is surjective.
\qed
\end{defn}

Thus, 1-spanned is globally generated, 2-spanned is very ample, etc. It is well known that for $X$ irreducible nodal,
$k$-nuns implies $(k-1)$-spanned. Known results about these notions \footnote{
We thank F. Viviani for these references} include:\par
- Catanese and Franciosi \cite{catanese-franciosi} have proven (assuming only
that $X$ has planar singularities) that 1-ununs implies expected dimension, i.e.
$h^0(L)= \deg(L)-g(X)+1$.\par - Catanese, Franciosi, Hulek and Reid \cite{catanese-franciosi-hulek-reid}
have proven, again for more general curves $X$, 
that $k$-ununs implies $(k-1)$-spanned.\par 
- Caporaso \cite{caporaso-imrn} has some related results for balanced line bundles.\par
- As mentioned above, Franciosi and Tenni \cite{
franciosi-tenni} have proven that 3-ununs implies property $N_0$.\par
By comparison, our main result here is that for all $X$ nodal, $k$-ununs implies  $N_{k-3}$.\par
The following Lemma will be used.
\begin{lem}\label{j-lem}
Let $p$ be a node on $X$, let $J_{a,b}$ be the ideal of type $(x^a, y^b)$ of colength
$e=a+b-1$ cosupported at $p$,  let $\pi:X'\to X$ be the blowing-up of $p$, and
$L'$ the unique line bundle on $X'$ of degree $\deg(L)-a-b$ such that $\pi_*(L')=J_{a,b}L$.
If $L$ is $k$-ununs on $X$, then $L'$ is $(k-e)$-ununs on $X'$.
\end{lem}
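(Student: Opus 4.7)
The plan is to describe $\pi\colon X' \to X$ concretely as the partial normalization of $X$ at the node $p$. Locally $\O_{X,p} \cong \C[[x,y]]/(xy)$ with branches $\{y=0\}$ and $\{x=0\}$, and $\pi^{-1}(p) = \{p_1,p_2\}$ sits one on each branch. A direct calculation with $J_{a,b} = (x^a,y^b)$ (restricting to each branch gives $(x^a)$ and $(y^b)$ respectively) identifies
\[
L' \cong \pi^*L \otimes \O_{X'}(-ap_1 - bp_2),
\]
consistent with $\pi_*L' = J_{a,b}L$ and $\deg(L') = \deg(L) - a - b$.

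For any subcurve $Y' \subseteq X'$, I would set $Y := \pi(Y')$; the irreducible components of $Y'$ and $Y$ correspond, and the behavior of $\pi|_{Y'}$ is governed by how many of the two branches at $p$ lie in $Y$:
\begin{enumerate}
\item Neither branch in $Y$: then $p \notin Y$, $\pi|_{Y'}$ is an isomorphism, $L'|_{Y'} \cong L|_Y$, so $\deg_{Y'}(L') = \deg_Y(L)$ and $g_{Y'} = g_Y$.
\item Exactly one branch (say through $p_1$) in $Y$: then $\pi|_{Y'}$ is again an isomorphism onto $Y$, and $L'|_{Y'} \cong L|_Y(-ap)$, so $\deg_{Y'}(L') = \deg_Y(L) - a$ and $g_{Y'} = g_Y$.
\item Both branches in $Y$: then $p$ is a node of $Y$ and $\pi|_{Y'}$ is the partial normalization at $p$. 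Using $\chi(\O_{Y'}) = \chi(\O_Y) + 1$ with the convention $\deg(\omega) = 2g-2$, we get $g_{Y'} = g_Y - 1$ and $\deg_{Y'}(L') = \deg_Y(L) - a - b$.
\end{enumerate}

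With these formulas, the required bound $\deg_{Y'}(L') \geq 2g_{Y'} - 2 + (k-e)$ with $e = a+b-1$ is checked against the hypothesis $\deg_Y(L) \geq 2g_Y - 2 + k$ in each case: case (i) is immediate since $e \geq 0$; case (ii) reduces to $\deg_Y(L) \geq 2g_Y + k - 1 - b$, which follows from $b \geq 1$; case (iii) reduces to $\deg_Y(L) \geq 2g_Y + k - 3$, strictly weaker than the hypothesis. The main obstacle is the bookkeeping in case (iii) when $Y'$ becomes disconnected (the two branches of $p$ sit on subcurves meeting only at $p$). Under the convention $\deg(\omega)=2g-2$ the identity $g_{Y'} = g_Y - 1$ still holds, so the same computation goes through; alternatively one can reduce to case (ii) applied to each connected component of $Y'$, which in fact yields a strictly stronger inequality. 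I would include a brief remark to settle this consistency.
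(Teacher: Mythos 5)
Your proposal is correct and follows essentially the same route as the paper: a case analysis on subcurves $Y'\subseteq X'$ according to whether they contain zero, one, or both preimages of $p$, with the same degree and genus bookkeeping (the paper normalizes to $0<b\leq a\leq e$ and uses $a\leq e$, i.e. $b\geq 1$, exactly as you do). Your explicit identification $L'\cong\pi^*L(-ap_1-bp_2)$ and the remark on disconnected $Y'$ in case (iii) are welcome additions that the paper leaves implicit.
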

\begin{proof} We may assume $0<b\leq a\leq e$.
Let $Y'$ be a subcurve of $X'$ and suppose to begin with that $Y'$ contains both preimages 
of $p$. Then $Y'$ is the blowing-up of $p$ on a uniquely determined subcurve
 $Y$ of $X$ containing $p$, and we have
\[\deg(L'_{Y'})=\deg(L_Y)-a-b\geq 2g(Y)-2+k-a-b=2g(Y')-2+k+1-e,\]
which is good enough. If $Y'$ contains precisely one preimage of $p$, it maps isomorphically to a 
subcurve  $Y$
of $X$ through $p$ and we have
\[\deg(L'_{Y'})\geq \deg(L_Y)-a\geq 2g(Y)-2+k-a\geq 2g(Y')-2+k-e\]
as $a\leq e$.  If $Y'$ contains no preimage of $p$ then $L|_{Y'}$ is trivially $k$-nuns. This concludes the proof. 
\end{proof}
As a warmup, we will prove the following known result
\begin{prop}\label{main-thm} Let $L$ be a line bundle on a nodal curve.\par
(i) If $L$ is 1-ununs , then $h^0(L)=\deg(L)-g+1$.\par
(ii) If $L$ is $k$-ununs for $k>1$, then $L$ is $(k-1)$-spanned.\par
\end{prop}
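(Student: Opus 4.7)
The plan is to prove (i) by a duality-and-adjunction argument, and then to derive (ii) from (i) by iteratively applying Lemma \ref{j-lem} (together with the analogous smooth-point twist) to reduce the relevant cohomology vanishing to the $1$-ununs case.

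For (i), since $X$ is nodal hence Gorenstein, Serre duality gives $H^1(X,L)^*\cong H^0(X,\omega_X\otimes L\inv)$, so I aim to rule out a nonzero section $s$ of $\omega_X\otimes L\inv$. Let $Y\subset X$ be the maximal subcurve (union of irreducible components) on which $s\equiv 0$, with complement $Y^+=\overline{X\setminus Y}$. Then $s|_{Y^+}$ is nonzero on each component of $Y^+$, yet must vanish along the node set $Y\cap Y^+$ since $s\equiv 0$ on $Y$. Combining the adjunction formula $\omega_X|_{Y^+}=\omega_{Y^+}(Y\cap Y^+)$ with this forced vanishing yields
\[
\deg(L|_{Y^+})\leq 2g_{Y^+}-2,
\]
contradicting $1$-ununs on $Y^+$ (with the case $Y=\emptyset$ giving the same inequality directly on $X$).

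For (ii), the surjectivity of $H^0(L)\to L/IL$ for an ideal $I$ of colength $k-1$ is equivalent, via the long exact sequence and $H^1(L)=0$ from (i), to the vanishing $H^1(IL)=0$. To prove the latter, decompose $I$ into its primary components at the points of its support and treat each point separately. At a smooth point $p$ with local ideal $\mathfrak m_p^{e_p}$, absorb this factor into the twist $L\mapsto L(-e_p p)$, which lowers the ununs level by $e_p$. At a node $p$ with local ideal of type $J_{a_p,b_p}$ of colength $e_p=a_p+b_p-1$, apply Lemma \ref{j-lem} to blow up $p$ and replace the current line bundle by $L'$ on the blowup with $\pi_*L'$ equal to this local modification and $R^1\pi_*L'=0$; the lemma also drops the ununs level by $e_p$. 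Carrying this out at every point in the support of $I$ produces a blowup $\pi:X''\to X$ and a line bundle $L''$ on $X''$ with $\pi_*L''=IL$, $R^1\pi_*L''=0$, and with the total drop in ununs level equal to $\sum_p e_p=k-1$; thus $L''$ is $1$-ununs on $X''$. Applying part (i) to the nodal curve $X''$ yields $H^1(X'',L'')=0$, which by Leray equals $H^1(X,IL)=0$.

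The main obstacle is that a general colength-$(k-1)$ ideal at a node need not be of type $J_{a,b}$: for instance, the principal ideal $(x+y)\subset k[[x,y]]/(xy)$ has colength $2$ but differs from every $J_{a,b}$. To handle such ideals one either needs a more refined blowup construction extending Lemma \ref{j-lem}, or a deformation argument exploiting that $\chi(IL)$ is constant and $h^0(IL)$ upper-semicontinuous on the Hilbert scheme of colength-$(k-1)$ subschemes, to propagate the vanishing from the $J_{a,b}$ stratum to the rest. Closing this gap is the main technical point beyond the direct application of Lemma \ref{j-lem}.
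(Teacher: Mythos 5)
Your part~(i) is correct and is in substance the paper's own argument, just packaged without induction: the paper picks a single component $Z$ with $\deg(L_Z)>\deg(\omega_X|_Z)$, concludes $s|_Z=0$, uses the residue/adjunction observation to view $s$ as a section of $\omega_{Z^+}(-L)$, and inducts on the number of components; your ``maximal subcurve on which $s$ vanishes'' runs the same residue argument in one step. Part~(ii) also has the paper's skeleton: reduce to $H^1(IL)=0$, absorb invertible local factors into a twist of $L$, and blow up the non-invertible factors at nodes via Lemma~\ref{j-lem}.

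The genuine issue is the ``main obstacle'' you flag at the end, and it dissolves for a reason simpler than either fix you propose. Your example $(x+y)\subset k[[x,y]]/(xy)$ is a principal ideal generated by a non-zerodivisor, hence \emph{invertible}, so it is handled by exactly the mechanism you already use at smooth points: replace $L$ by the line bundle $IL$, which is still $1$-ununs, and apply part~(i); no blowup is needed. What you are missing is the classification of finite-colength ideals in the local ring of a node (this is what the paper invokes its reference on the Hilbert scheme of nodal curves for): every such ideal is either invertible or equal to some $J_{a,b}$ with $a,b\geq 1$. Sketch: choose $f=(f_1,f_2)\in I$ realizing the minimal vanishing orders $a$ and $b$ on the two branches simultaneously (possible after replacing $f$ by a generic linear combination of two elements realizing them separately); then $(f)\subseteq I\subseteq (x^a,y^b)$, and since $(f)$ has colength $a+b$ while $(x^a,y^b)$ has colength $a+b-1$, the ideal $I$ must be one of the two. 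With this dichotomy your reduction closes with no further work. By contrast, the semicontinuity argument you suggest is not a free fix: knowing $h^1=0$ at the finitely many points $J_{a,b}$ of the punctual Hilbert scheme gives, by upper semicontinuity, vanishing only on an open neighborhood of those points, which need not contain every invertible ideal; and a ``refined blowup'' is unnecessary once invertibility is observed.
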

\begin{proof}
We begin by proving that if $L$ is 1-ununs, then 
\eqspl{vanishing}{h^0(\omega_X(-L))=0,} which by 
Riemann-Roch and Serre Duality implies \eqspl{good-sections}{h^0(L)=\deg(L)-g(X)+1.}
To prove \eqref{vanishing}, we use induction on the number of irreducible components of $X$. If $X$ is irreducible, the result is clear. For the induction step, let $s\in H^0(\omega_X(-L))$.
 Note that
\[\deg(L)=\sum\limits_Z \deg(L_Z)>\deg(\omega_X)=\sum\limits_Z\deg(\omega_X|_Z)\]
the sum being over all irreducible components $Z$ of $X$. Therefore there exists an irreducible component $Z$
such that \[\deg(L_Z)>\deg(\omega_X|_Z)\geq\deg(\omega_Z).\]
Therefore $s|_Z=0$. Let $Y=\overline{X\setminus Z}$ be the complementary curve.
Then since $s$ vanishes on $Y\cap Z$, it 
may be viewed as a section of $\omega_Y(-L_Y)$. Since $L_Y$ is 1-ununs, it follows that $s=0$.
This proves \eqref{vanishing}\par
Next, we will show that if $L$ is $k$-ununs, then
 for any ideal $J$ of colength $e\leq k-1$ on $X$, 
we have $h^0(JL)=\deg(L)-g(X)+1-e$ (this is equivalent to $L$ being $(k-1)$-spanned).
 If $J$ is invertible, it
 suffices to note that  the line bundle $JL$ is 1-ununs, then use \eqref{good-sections} for $JL$
 in place of $L$. 
Therefore we may assume $J$ is cosupported at the nodes. Using an obvious induction,  
we may assume $J$ is cosupported at a single node  $p\in X$. If $J$ is invertible,
again we may conclude by applying \eqref{good-sections} to $JL$.
If not, the results of \cite{hilb}  show that $J$ has the form $J_{a,b}$ as in Lemma  \ref{j-lem} 
and it suffices to apply \eqref{good-sections} 
to $L'$ on $X'$.\par
\end{proof}
\section{Proofs}
Green's duality theorem (\cite{green-koszul}, (2.c.6)) is a consequence of Serre duality and goes through
verbatim in the Gorenstein case. 
Now property $\mathrm N_p$ for $L$ is equivalent to the vanishing of $\cK_{p,2}(X, L)$,
which by duality is equivalent to the vanishing of $\cK_{m-2-p, 0}(X, \omega_X, L), m=h^0(L)$.
Therefore Theorem \ref{N_p-thm} is a consequence of Theorem \ref{vanishing-thm}
which in turn is a special case of Theorem \ref{vanishing-e-thm}, so it will suffice to prove the latter.\par
The proof is by induction on the number of irreducible components of $X$. When $X$ is irreducible,
Green's original proof of Theorem (3.a.1) in \cite{green-koszul} applies. 
In fact his argument proves the following more general statement, which we will need.
\begin{lem}[Green]\label{green-lem}
With notations as above, if  $X$ is irreducible and $V\subset H^0(E)$ is any subspace,
then the natural map
\eqspl{injectivity-v-eq}{
\bigwedge\limits^tW\otimes V\to \bigwedge\limits^{t-1}W\otimes H^0(E(L))
} is injective for all $t\geq \dim(V)$.\qed\end{lem}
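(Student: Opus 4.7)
The plan is to adapt Green's original proof of (\cite{green-koszul}, Theorem 3.a.1), which treats the case $V = H^0(E)$, to the present slightly more general statement. In that argument, $H^0(E)$ enters only as the coefficient module of the Koszul differential, so strengthening to an arbitrary subspace $V$ with $t \geq \dim V$ amounts essentially to bookkeeping.

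Concretely, I would induct on $m = \dim W$, allowing $V$ to vary over all subspaces of $H^0(E)$. The base case $m = 0$ forces $t = 0$, hence $V = 0$ by the hypothesis $t \geq \dim V$, so the conclusion is vacuous. For the inductive step, irreducibility of $X$ lets me pick a general section $f \in W$: since $X$ is integral, $f$ is a nonzerodivisor, giving a short exact sequence $0 \to E \to E \otimes L \to E(L)|_{Z(f)} \to 0$ of sheaves. By varying $f$ in $W$ and using that $X$ is irreducible, one further arranges that the multiplication map $\mu_f \colon V \hookrightarrow H^0(E(L))$ is injective (pick $f$ so that $Z(f)$ is disjoint from the common zero locus of a basis of $V$). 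Fix a splitting $W = \langle f \rangle \oplus W'$ with $\dim W' = m - 1$, and decompose any $s \in \bigwedge^t W \otimes V$ uniquely as $s_0 + s_1 \wedge f$, with $s_0 \in \bigwedge^t W' \otimes V$ and $s_1 \in \bigwedge^{t-1} W' \otimes V$. Computing the Koszul differential $d(s)$ and projecting onto the summands $\bigwedge^{t-1} W'$ and $\bigwedge^{t-2} W' \wedge f$ of $\bigwedge^{t-1} W$ yields the pair
\[d'(s_1) = 0, \qquad d'(s_0) + (-1)^{t-1}(\mathrm{id}\otimes\mu_f)(s_1) = 0,\]
where $d'$ is the Koszul differential for $(W', V)$. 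Applying the inductive hypothesis to the first equation gives $s_1 = 0$; the second then reduces to $d'(s_0) = 0$, and a further application gives $s_0 = 0$.

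The main obstacle is the boundary case $t = \dim V$, where naively the first equation demands $t - 1 \geq \dim V$, short by one. Green resolves this by running the induction on the compound parameter $m - \dim V$ rather than on $m$ alone, so that the passage $m \mapsto m - 1$ exactly offsets the loss $t \mapsto t - 1$; alternatively, one exploits that $s_1$ actually lies in a proper subspace $V' \subsetneq V$ after a change of basis adapted to $f$. Verifying that this bookkeeping survives the replacement $H^0(E) \rightsquigarrow V$ is the only genuine content beyond Green's original presentation, and it does, since the Koszul differential only uses $V$ via the multiplication map $W \otimes V \to H^0(E \otimes L)$.
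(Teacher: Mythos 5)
Your reduction is set up correctly: splitting $W=W'\oplus\langle f\rangle$, writing $s=s_0+s_1\wedge f$, and projecting the Koszul differential onto the summands $\bigwedge^{t-1}W'$ and $\bigwedge^{t-2}W'\wedge f$ does produce exactly the two equations you display. The gap is the boundary case $t=\dim V$, which you flag but do not resolve, and unfortunately that case is the whole lemma: even to prove injectivity for a given $t>\dim V$, your induction on $\dim W$ invokes the statement for $(W',V,t-1)$, and iterating eventually lands on an instance with $t'=\dim V$, so the equality case cannot be quarantined. Neither proposed fix closes it. Re-indexing the induction by $m-\dim V$ changes nothing: the step still requires injectivity of $\bigwedge^{t-1}W'\otimes V\to\bigwedge^{t-2}W'\otimes H^0(E(L))$ with $t-1<\dim V$, which is not an instance of the lemma and is in general false --- on $X=\mathbb{P}^1$ with $L=E=\mathcal{O}(1)$, $W'=V=H^0(\mathcal{O}(1))$ and $t-1=1<2=\dim V$, the map $W'\otimes V\to H^0(\mathcal{O}(2))$ kills $x\otimes y-y\otimes x$. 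The alternative claim that $s_1$ lands in a proper subspace $V'\subsetneq V$ after a basis change adapted to $f$ is asserted without justification, and I see no reason for it: $s_1$ is the $f$-component of an arbitrary kernel element, and nothing constrains its $V$-support. Note also that in your argument irreducibility enters only through the injectivity of $\mu_f$, which is never actually used in deducing $s_1=0$ and then $s_0=0$; since the statement fails badly for reducible $X$ (take $\omega\in\bigwedge^tW$ built from sections vanishing on one component and $v$ a section vanishing on the complementary one, so every product $w_jv$ vanishes), a correct proof must use integrality more substantively than this.

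For comparison: the paper gives no proof of this lemma at all, asserting only that Green's proof of Theorem (3.a.1) in \cite{green-koszul} establishes the stronger statement with $H^0(E)$ replaced by an arbitrary subspace $V$. That argument is not the pure exterior-algebra induction you describe; it exploits irreducibility geometrically, through evaluation of the sections of $V$ and $W$ at general (equivalently, generic) points of $X$, and it is exactly that input which handles $t=\dim V$. You should either reproduce Green's argument and verify that it goes through verbatim with $V$ in place of $H^0(E)$ (which is what the paper implicitly asks the reader to do), or supply an honest treatment of the equality case; as written, the proposal establishes only the instances that are already forced by repeated descent from much larger $t$, i.e., nothing.
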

For the induction step, pick
any component $Z$ and let $Y=Z^+$ be the complementary subcurve.
Let $V_Z\subset H^0(E_Z)$ be the image of $V\subset H^0(E)$ and similarly for $V'_Z\subset H^0(E(L)_Z)$
and for $V"_Z\subset H^0(E(-L)_Z)$. 
 Consider the horizontally exact, vertically semi-exact (= arrows compose to zero) diagram
\eqspl{}{
\begin{matrix}
0\to&\bigwedge\limits^{t+1}W\otimes H^0(E_Y(-Y.Z-L))&\to&\bigwedge\limits^{t+1}W\otimes H^0(E(-L))&\to&
\bigwedge\limits^{t+1}W\otimes V"_Z&\to 0\\
&\downarrow&&\downarrow&&\downarrow&\\
0\to&\bigwedge\limits^tW\otimes H^0(E_Y(-Y.Z))&\to&\bigwedge\limits^tW\otimes H^0(E)&\to&
\bigwedge\limits^tW\otimes V_Z&\to 0\\
&\downarrow&&\downarrow&&\downarrow&\\
0\to&\bigwedge\limits^{t-1}W\otimes H^0(E_Y(-Y. Z+L))&\to&\bigwedge\limits^{t-1}W\otimes H^0(E(L))&\to&
\bigwedge\limits^{t-1}W\otimes V'_Z&\to 0
\end{matrix}
}
Each column is a complex and the cohomology of the middle vertical column is 
$\cK_{t,0}(X, E, L, W)$, which we want to show
vanishes. \par
Let's next study the right column. I claim that the right bottom vertical map is injective,
a fortiori the right column is exact. Let $W(-Z)$ denote the kernel of the restriction map
$W\to W_Z$. Then $\bigwedge\limits^tW$ is filtered with quotients $\bigwedge\limits^iW(-Z)\otimes
\bigwedge\limits^{t-i}W_Z, i=0,...,m-m_Z$. This filtration induces one on the right column, whose
quotients have the form of a tensor product of a fixed vector space, viz. $\bigwedge\limits^iW(-Z)$, 
with a complex
\[\bigwedge\limits^{t+1-i}W_Z\otimes V"_Z\to\bigwedge^{t-i}W_Z\otimes V_Z\to\bigwedge\limits^{t-i-1}W_Z
\otimes V'_Z.\] 
We have
\[t-i\geq t-(m-m_Z)\geq e_Z.\]
By Green's Lemma \ref{green-lem} above, the right vertical map is injective, as claimed.

To conclude the proof, it will now suffice to prove that
the left column is exact.
To this end,  note as above that $\bigwedge\limits^tW$ has a filtration with quotients $\bigwedge\limits^iW(-Y)\otimes\bigwedge\limits^{t-i}W_Y,
i=0,...,m-m_Y$, which induces a filtration on the left column with quotients
in the form of a  tensor product of a fixed vector space with a complex
\[\bigwedge\limits^{t+1-i}W_Y\otimes H^0(E_Y(-Y.Z-L))
\to\bigwedge^{t-i}W_Y\otimes H^0(E_Y(-Y.Z))\to\bigwedge\limits^{t-i-1}W_Y
\otimes H^0(E_Y(-Y.Z+L)) .\] 
The middle cohomology of this is just $\cK_{t-i,0}(Y, E_Y(-Y.Z), L_Y, W_Y)$.
Our assumption that $t\geq m-m_Z+e_Z$ for all components $Z$ of $X$ implies that
$t-i\geq m_Y-m_W+e'_W$ for all components $W$ of $Y$ where $e'_W$ is the rank
of the restriction map $H^0(E_Y(-Z.Y))\to H^0(E_W(-Z.Y))$ (note that
via extension by zero,  $H^0(E_Y(-Z.Y))$
is contained in the image of restriction $H^0(E)\to H^0(E_Y)$). Thus,
$t-i$ satisfies a lower bound analogous to \eqref{t-lower-bound-eq}
for $(Y, E_Y(-Y.Z), L_Y, W_Y)$ in place of $(X, E, L, W)$.
Therefore by our induction on the number of components, 
the Koszul cohomology $\cK_{t-i,0}(Y, E_Y(-Y.Z), L_Y, W_Y)$ vanishes,
concluding the proof.
\qed
\subsection*{Acknowledgments} We thank the referee for helpful comments and suggestions, and F. Viviani for some
pointers to the literature.
  \bibliographystyle{amsplain}
  \bibliography{mybib}

\providecommand{\bysame}{\leavevmode\hbox to3em{\hrulefill}\thinspace}
\providecommand{\MR}{\relax\ifhmode\unskip\space\fi MR }
\providecommand{\MRhref}[2]{%
  \href{http://www.ams.org/mathscinet-getitem?mr=#1}{#2}
}
\providecommand{\href}[2]{#2}
\begin{thebibliography}{1}

\bibitem{caporaso-imrn}
L.~Caporaso, \emph{Linear series on semistable curves}, International math.
  res. notices \textbf{2011}, 1107--1134, \url{arxiv.org:0812.1682v4}.

\bibitem{catanese-franciosi}
F.~Catanese and M.~Franciosi, \emph{Divisors of small genus on algebraic
  surfaces and projective embeddings}, Proceedings of the Hirzebruch 65
  Conference on Algebraic Geometry (Ramat Gan, 1993), Israel Math. Conf. Proc.,
  vol.~9, 1996., pp.~109--140.

\bibitem{catanese-franciosi-hulek-reid}
F.~Catanese, M.~Franciosi, K.~Hulek, and M.~Reid, \emph{Embeddings of curves
  and surfaces}, Nagoya Math. J. \textbf{154} (1999), 185--220.

\bibitem{franciosi-tenni}
M.~Franciosi and E.~Tenni, \emph{The canonical ring of a 3-connected curve},
  Preprint \url{ arXiv.org:1107.5535}.

\bibitem{green-koszul}
M.~L. Green, \emph{Koszul cohomology and the geometry of projective varieties},
  J. Differential Geometry \textbf{19} (1984), 125--171.

\bibitem{laz-pos}
R.~Lazarsfeld, \emph{Positivity in algebraic geometry}, Springer, 2004.

\bibitem{hilb}
Z.~Ran, \emph{A note on {Hilbert} schemes of nodal curves}, J. Algebra
  \textbf{292} (2005), 429--446.

\end{thebibliography}
\end{document}